\documentclass[11pt, a4paper]{amsart}
\usepackage{amsmath,amssymb,amsfonts, float}
\usepackage[all]{xy}
\usepackage{caption}
\usepackage{enumerate}
\usepackage{mathpazo}
\usepackage{a4wide}
\usepackage{diagbox}
\usepackage{subcaption}

\usepackage{bm}
\usepackage{graphicx}
\usepackage[breaklinks=true]{hyperref}

\usepackage{xcolor}
\usepackage{multicol}
\usepackage{tabularx}

\usepackage{hyperref}
\usepackage[capitalize]{cleveref}

\usepackage[normalem]{ulem}

\newtheorem{thm}{Theorem}[section] 
\newtheorem*{thm*}{Theorem} 
\newtheorem{prop}[thm]{Proposition}

\newtheorem{cor}[thm]{Corollary}

\theoremstyle{definition}
\newtheorem{definition}[thm]{Definition}

\newtheorem{conj}[thm]{Conjecture}

\newtheorem{question}[thm]{Question}
\newtheorem{rem}[thm]{Remark}

\DeclareMathOperator{\C}{\mathbb{C}}
\DeclareMathOperator{\Z}{\mathbb{Z}}

\DeclareMathOperator{\F}{\mathbb{F}}

\DeclareMathOperator{\Hom}{{\rm Hom}}

\newcommand{\Ann}{{\rm Ann}}

    \DeclareFontFamily{U}{wncy}{}
    \DeclareFontShape{U}{wncy}{m}{n}{<->wncyr10}{}
    \DeclareSymbolFont{mcy}{U}{wncy}{m}{n}
    \DeclareMathSymbol{\Sha}{\mathord}{mcy}{"58}

\numberwithin{equation}{section}

\DeclareSymbolFont{bbold}{U}{bbold}{m}{n}
\DeclareSymbolFontAlphabet{\mathbbold}{bbold}

\usepackage{hyperref}

\newcommand{\Rad}{{\rm Rad}}
\newcommand{\s}{\text{ss}}

\newcommand{\diag}{\text{diag}}

\title{Supercharacter theory and applications to \\ Ramanujan sums over a finite Frobenius ring}
 \author{Tung T. Nguyen, Nguyen Duy T\^{a}n, Enrique Trevi\~no}

 \address{Department of Mathematics, Elmhurst University, Elmhurst,  Illinois, USA}
 \email{tung.nguyen@elmhurst.edu}
 
  \address{
Faculty Mathematics and Informatics, Hanoi University of Science and Technology, 1 Dai Co Viet Road, Hanoi, Vietnam } 
\email{tan.nguyenduy@hust.edu.vn}

 \address{Department of Mathematics and Computer Science, Lake Forest College, Lake Forest, Illinois, USA}
 \email{trevino@lakeforest.edu}
 
\thanks{TTN is partially supported by an AMS-Simons Travel Grant.  NDT is partially supported by the Vietnam National
Foundation for Science and Technology Development (NAFOSTED) under grant number 101.04-2023.21}
\keywords{Supercharacter theory, Cayley graphs, Finite rings, Exponential sums.}
\subjclass[2020]{Primary 11L03, 11T24, 05C25}
\begin{document}
\maketitle
\begin{abstract}
The theory of  Ramanujan sums has been playing a fundamental role in several subfields of mathematics. They appear in the theory of special values of zeta functions, spectral graph theory, representation theory, and analytic number theory. Recent work has shown that classical Ramanujan sums can also be interpreted as a super-Fourier transform via the theory of supercharacters for the ring $\Z/n$. In this article, building upon our recent work on supercharacters over an arbitrary finite Frobenius ring, we explore additional arithmetical properties of Ramanujan sums. Our approach provides a unified framework that generalizes various results in the literature regarding these sums. As a by-product, we also describe a new criterion for determining when a finite commutative ring is Frobenius, which could be of independent interest to the algebra community.

\end{abstract}

\section{Introduction}

Let $n$ be a positive integer and $\zeta_n$ be a fixed primitive $n$-th root of unity. The sum 
\begin{equation}  \label{eq:classical_ramanujan}
c_n(m) = \sum_{\substack{1 \leq j \leq n \\ \gcd(j,n)=1}} \zeta_n^{mj},
\end{equation}
is known in the literature as a Ramanujan sum. Although Dirichlet and Dedekind had considered this sum in the mid-19th century, it was Ramanujan who first realized its importance in the early 20th century, using it to investigate several problems in number theory (\cite[Page 159]{hardy1999ramanujan}). For instance, Ramanujan used these sums to derive new expressions for arithmetical functions such as the divisor function. Ramanujan sums have since found applications in various subfields of mathematics, including representation theory, analytic number theory, sieve theory, graph theory, and physics. In graph theory, for example, these sums have been used to study the spectra of certain classes of graphs. We refer the reader to \cite[Section 1.1]{fowler2014ramanujan} for a more extensive discussion on the history and applications of Ramanujan sums.

Our own interest in Ramanujan sums stems from their recurring appearance in our research.  These sums, together with Gauss sums, first appear in our calculations of the spectrum of the generalized Paley graph associated with a quadratic character (see \cite{paleygraph}). They reappear in our investigation of Fekete polynomials associated with principal Dirichlet characters. More precisely, we recall that the $n$-th Fekete polynomial is defined as 
\begin{equation}
        F_n(x)= \sum_{\substack{1 \leq a \leq n\ \gcd(a,n)=1}} x^a.
\end{equation}
By definition, $c_n(m)$ is precisely the value of $F_n$ at an $n$-th root of unity; namely $F_n(\zeta_n^m).$ Using the explicit formula for $c_n(m)$, we can show that, if $n$ is squarefree and $d$ is a divisor of $n$, the cyclotomic polynomial $\Phi_d$ is not a factor of $F_n$ (see \cite[Corollary 2.7]{chidambaram2023fekete}). Later on, while working on prime Cayley graphs (see \cite{chudnovsky2024prime}), we found the work of Klotz-Sander and So on unitary graphs and gcd-graphs where Ramanujan sums play central roles (see \cite{klotz2007some, so2006integral}). For example, in \cite{so2006integral}, So uses Ramanujan sums to classify all integral circulant graphs, which effectively resolves a conjecture of Klotz and Sander on integral circulant graphs stated in \cite{klotz2007some}. While reading more work on gcd-graphs, we soon realized that the theory of gcd-graphs can be generalized to an arbitrary finite commutative ring. Furthermore, when the underlying ring is a Frobenius ring, we can even develop a general theory of Ramanujan sums and utilize them to calculate explicitly the spectra of the associated gcd-graphs. This circle of ideas has led us to various works in this research direction (see \cite{minavc2024gcd, nguyen2025gcd, nguyen2024integral, nguyen2025perfect}). In this article, building upon recent advances on supercharacter theory and its applications to classical Ramanujan sums (see \cite{diaconis2008supercharacters, fowler2014ramanujan, supercharacters_nguyen}), we study some further arithmetical properties of generalized Ramanujan sums. Along the way, we discuss some connections with spectral graph theory. Additionally, we investigate the determinant of a matrix associated with Ramanujan sums. Using the theory of supercharacters for Frobenius rings developed in \cite{supercharacters_nguyen}, we determine the precise value of this determinant (up to a sign). 

We remark that the generalized theory of Ramanujan sums over finite Frobenius rings was pioneered by Lamprecht in his 1953 work \cite{lamprecht1953allgemeine}. However, his contributions in this area have not received widespread recognition as it should in the mathematical community (see  \cite{honold2001characterization} for some further valuable historical context on Lamprecht's work on Frobenius rings). Given the significance of Lamprecht's early insights, we feel it is important to acknowledge his foundational role in this line of research. 

\subsection{Outline}
The article is structured as follows. In \cref{subsec:supercharacter_group}, we review the theory of supercharacters over a finite abelian group. The key ideas in this section have been previously discussed in \cite{fowler2014ramanujan}. Our main contribution here is the introduction of certain related sums, which appear naturally in the spectral description of certain Cayley graphs. In \cref{subsec:supercharacter_ring}, we apply the results from the previous section to the case where $R$ is a finite commutative Frobenius ring.  More precisely, we explain the existence of a natural supercharacter theory on $R$ and show how this theory is closely related to the theory of Ramanujan sums developed in \cite{nguyen2025gcd}. Using the general results in \cref{subsec:supercharacter_group}, we derive various orthogonality relations for these Ramanujan sums. We also explain how our results recover some well-known formulas in the literature. Furthermore, we give an explicit formula for the $k$-th moment of these Ramanujan sums for each $k \geq 1$. In \cref{sec:determinant}, we explicitly describe the supercharacter table for the associated supercharacter theory outlined in \cref{subsec:supercharacter_ring}. Additionally, we show that  the determinant of this supercharacter table can determine whether a finite commutative ring is Frobenius or not. Our main theorem provides a unified proof for various special cases studied in \cite{schlage2021determinant} (for the case where $R$ is $\Z/n$) and \cite{minavc2024gcd} (for the case where $R$ is a quotient of $\F_q[x]$). Finally, in \cref{sec:Kluyver}, we provide a generalization of Kluyver's formula, which also unifies various scattered results in the literature.

\section{Frobenius rings and their supercharacter theories} 

\subsection{Supercharacter theory and supercharacter table of a finite abelian group} \label{subsec:supercharacter_group}
We first recall the definition of a supercharacter theory on a finite abelian group $G.$ We refer the reader to \cite{brumbaugh2014supercharacters, diaconis2008supercharacters, fowler2014ramanujan} for further discussions on this topic. We remark that, in order to keep a consistent set of notations, our discussion here closely aligns with \cite[Section 2]{fowler2014ramanujan}. 
\begin{definition}
A supercharacter theory on $G$ is a pair $(\mathcal{K}, \mathcal{X})$ where  $\mathcal{K}=\{K_1, K_2, \ldots, K_m\}$ be a partition of $G$ and $\mathcal{X} =\{X_1, X_2, \ldots, X_m \}$ a partition of the dual group $\widehat{G}=\Hom(G, \C^{\times})$ of characters of $G$ which satisfies the following conditions 
    \begin{enumerate}
        \item $\{0 \} \in \mathcal{K};$
        \item $|\mathcal{X}| = |\mathcal{K}|;$
        \item For each $X_i \in \mathcal{X}$, the character sum 
        \[ \sigma_i = \sum_{\chi \in X_i} \chi\]
        is constant on each $K \in \mathcal{K}$;
        \item In the applications below, we will only use supercharacter theories satisfying the following additional property: for each \(X \in \mathcal X\) and each superclass \(K_i \in \mathcal K\), the sum
\[
\sum_{k \in K_i} \chi(k)
\]
is independent of the choice of \(\chi \in X\). We denote this common value by \(\Omega_{K_i}(X)\), or simply by \(\Omega_i(X)\). As explained in \cite[Section 2]{supercharacters_nguyen}, this extra condition gives a better framework to do spectral theory for certain associated graphs. Additionally, since we only work with undirected simple graphs, we will assume that $K_i=-K_i$ for each $1 \leq i \leq m.$
            \end{enumerate}
\end{definition}

For a supercharacter theory $(\mathcal{K}, \mathcal{X})$, the corresponding supercharacter table is the $m \times m$ matrix $S = (\sigma_i(K_j))_{i,j=1}^m.$ More precisely, 
\begin{equation}
S = \begin{tabular}{c|cccc}
 & $K_1$ & $K_2$ & $\cdots$ & $K_m$ \\
\hline
$\sigma_1$ & $\sigma_1(K_1)$ & $\sigma_1(K_2)$ & $\cdots$ & $\sigma_1(K_m)$ \\
$\sigma_2$ & $\sigma_2(K_1)$ & $\sigma_2(K_2)$ & $\cdots$ & $\sigma_2(K_m)$ \\
$\vdots$ & $\vdots$ & $\vdots$ & $\ddots$ & $\vdots$ \\
$\sigma_m$ & $\sigma_m(K_1)$ & $\sigma_m(K_2)$ & $\cdots$ & $\sigma_m(K_m)$ \\
\end{tabular}
\end{equation}

As explained in \cite{fowler2014ramanujan}, the matrix $S$ satisfies several orthogonality properties. To describe these properties, we  recall some concepts. First,  the space of complex-valued functions $f\colon G \to \C$ is equipped with the following natural inner product. 
\begin{equation} \label{eq:inner_product}
\langle f_1, f_2 \rangle = \frac{1}{|G|} \sum_{g \in G} f_1(g) \overline{f_2(g)}.
\end{equation}
A function $f\colon G \to \C$ is called a superclass function if $f$ is constant on each superclass in $\{K_1, K_2, \ldots, K_m \}.$ For a superclass function $f$, we will denote by $f(K_i)$ the value of $f$ at an element $x \in K_i.$ The space of all superclass functions with respect to the pair $(\mathcal{K}, \mathcal{X})$ will be denoted by $\mathcal{S}.$ This space $\mathcal{S}$ inherits the inner product structure from \cref{eq:inner_product}. For any two functions $f_1, f_2 \in \mathcal{S}$, their inner product can be expressed more concisely as:

\begin{equation} \label{eq:inner_product_new}
\langle f_1, f_2 \rangle = \frac{1}{|G|} \sum_{\ell=1}^m |K_\ell| f_1(K_\ell) \overline{f_2(K_\ell)}.
\end{equation}

As explained in \cite{fowler2014ramanujan}, $\{\sigma_i\}_{i=1}^m$ forms an orthogonal basis for $\mathcal{S}$. More precisely, we have 
\begin{equation} \label{eq:orthogonal}
\langle \sigma_i, \sigma_j \rangle = |X_i| \delta_{i,j}, 
\end{equation}
where $\delta$ is the Kronecker delta function.  By \cref{eq:inner_product_new} and \cref{eq:orthogonal} we have the following formula (see \cite[Equation 2.4]{fowler2014ramanujan}) 
\begin{equation} \label{eq:orthogonal_2}
\frac{1}{|G|} \sum_{\ell = 1}^m |K_\ell| \sigma_i(K_\ell) \overline{\sigma_j(K_\ell)}= |X_i| \delta_{i,j}. 
\end{equation}
Let 
\[ D = \diag\left(\sqrt{|K_1|}, \ldots, \sqrt{|K_m|} \right),\]
\[ L = \frac{1}{\sqrt{|G|}} \diag \left(\frac{1}{\sqrt{|X_1|}}, \ldots, \frac{1}{\sqrt{|X_m|}} \right). \]

Then by \cref{eq:orthogonal_2}, we have 
\[ (SD)(SD)^{*} = |G| \diag \left(|X_1|, \ldots, |X_m| \right). \]
Furthermore, if we let $U = LSD$ then

\[ U = \frac{1}{\sqrt{|G|}} \left[ \frac{\sigma_i(K_j) \sqrt{|K_j|}}{\sqrt{|X_i|}} \right]_{i,j=1}^m\]
and $U U^{*}=I$; namely $U$ is a unitary matrix. Since $U^{*} U = I$, we obtain the following orthogonality condition. 
\begin{equation} \label{eq:orthogonal_3}
    \frac{\sqrt{|K_i||K_j|}}{|G|} \sum_{\ell=1}^m \frac{\sigma_{\ell}(K_i) \overline{\sigma_{\ell}(K_j)}}{|X_\ell|} = \delta_{i,j}. 
\end{equation}

We state here a simple corollary of the fact that $U$ is unitary, which we will use later on. 
\begin{thm} \label{thm:det-general}
We have the following equality  
    \[ |\det(S)|^2 = |G|^{m} \prod_{i=1}^{m} \frac{|X_i|}{|K_i|} .\] 
    In particular, if $|K_i|=|X_i|$ for each $1 \leq i \leq m$, then $|\det(S)|=|G|^{\frac{m}{2}}.$
\end{thm}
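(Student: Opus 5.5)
The plan is to take determinants in the factorization $U = LSD$, using that $U$ is unitary, as established just before the statement. Since $UU^{*}=I$, we have $|\det(U)|^2 = \det(U)\overline{\det(U)} = \det(UU^{*}) = 1$. Multiplicativity of the determinant then gives
\[ 1 = |\det(U)|^2 = |\det(L)|^2\,|\det(S)|^2\,|\det(D)|^2 . \]
So the whole statement reduces to computing the determinants of the two diagonal matrices $L$ and $D$.

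Both are diagonal with positive real entries, so their determinants are the products of the diagonal entries. For $D$,
\[ |\det(D)|^2 = \prod_{i=1}^m |K_i| . \]
For $L$, there is a scalar factor $|G|^{-1/2}$ in front of an $m\times m$ diagonal matrix, which contributes $|G|^{-m/2}$ to the determinant. Hence
\[ |\det(L)|^2 = |G|^{-m}\prod_{i=1}^m |X_i|^{-1} . \]
Substituting both into the displayed identity and solving for $|\det(S)|^2$ gives
\[ |\det(S)|^2 = |G|^{m}\prod_{i=1}^{m}\frac{|X_i|}{|K_i|} . \]
Alternatively, one could start from the identity $(SD)(SD)^{*} = |G|\,\diag(|X_1|,\ldots,|X_m|)$ and take determinants directly; this gives the same result.

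For the special case: if $|K_i|=|X_i|$ for each $i$, the product equals $1$. Then $|\det(S)|^2=|G|^m$, and taking the nonnegative square root yields $|\det(S)|=|G|^{m/2}$.

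There is essentially no obstacle here, since the substantive input is the unitarity of $U$, which was derived earlier from the orthogonality relation \cref{eq:orthogonal_2}. The only points needing care are bookkeeping. First, the scalar $|G|^{-1/2}$ in $L$ must be raised to the $m$-th power in the determinant. Second, the indexing must be consistent: the row indices of $S$ (supercharacters $\sigma_i$, weighted by $|X_i|$) and the column indices (superclasses $K_j$, weighted by $|K_j|$) both run over the same $m$, which is guaranteed by condition (2), $|\mathcal{X}|=|\mathcal{K}|$. This is what makes $L$, $S$ and $D$ square and the factorization valid.
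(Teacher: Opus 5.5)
Your proof is correct and follows the paper's approach. The paper presents the statement as a direct corollary of the unitarity of $U = LSD$, and taking determinants in that factorization (or equivalently in $(SD)(SD)^{*} = |G|\,\diag(|X_1|,\ldots,|X_m|)$), with the bookkeeping you describe, is exactly the intended argument.
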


We remark that by \cite[Proposition 2.3]{supercharacters_nguyen}, we have 
    \[ \frac{\Omega_j(X_i)}{|K_j|}  = \frac{\sigma_i(K_j)}{|X_i|}.\]
As a result, each orthogonality relation for $\sigma_i(K_j)$ can be converted to an equivalent one for $\Omega_j(X_i)$ and vice versa. 
For example, we can rewrite $U$ as 
 \[ U = \frac{1}{\sqrt{|G|}} \left[ \frac{\Omega_j(X_i) \sqrt{|X_i|}}{\sqrt{|K_j|}} \right]_{i,j=1}^m, \]
 and \cref{eq:orthogonal_3} is equivalent to 
 \begin{equation} \label{eq:orthogonal_4}
  \frac{1}{|G|\sqrt{|K_i| |K_j|}} \sum_{\ell=1}^m |X_\ell| \Omega_i(X_\ell) \overline{\Omega_j(X_{\ell})} = \delta_{i,j}. 
 \end{equation}

 We remark that since we assume $K_i = -K_i$, $\Omega_j(X_\ell) \in \mathbb{R}$ (see \cite[Proposition 2.3]{supercharacters_nguyen}). As a result, \cref{eq:orthogonal_4} is equivalent to 
 \begin{equation} \label{eq:orthogonal_5}
  \frac{1}{|G|\sqrt{|K_i| |K_j|}} \sum_{\ell=1}^m |X_\ell| \Omega_i(X_\ell) \Omega_j(X_{\ell}) = \delta_{i,j}. 
 \end{equation}
  In particular, when $i=j$, we have 

  \begin{equation} \label{eq:orthogonal_6}
  \frac{1}{|K_i||G|} \sum_{\ell=1}^m |X_\ell| \Omega_i(X_\ell)^2  = 1. 
 \end{equation}

\subsection{Orthogonality relations for Ramanujan sums over a finite Frobenius ring} \label{subsec:supercharacter_ring}
In this section, we apply the results to the case where the abelian group $G$ is the additive structure of a finite ring $R$. Here, we exploit the fact that a ring has another structure; namely the multiplicative structure. In general, it is unclear how to construct a supercharacter theory for a finite commutative ring. However, as explained in \cite{supercharacters_nguyen}, there is a class of rings for which such a theory naturally exists; namely the class of finite commutative Frobenius rings. We first recall this concept. 

\begin{definition} \label{defn:frobenius}
Let $R$ be a finite commutative ring. We say that $R$ is a Frobenius ring if $R$ is a $\Z/n$-algebra equipped with a non-degenerate $\Z/n$ linear functional $\psi: R \to \Z/n.$ Here, non-degenerate means that the kernel of $\psi$ does not contain any non-zero ideal in $R.$ 
\end{definition}

By \cite{honold2001characterization, lamprecht1953allgemeine}, there are some other equivalent characterizations of a finite Frobenius ring. For example, a local ring is Frobenius if and only if its socle module is cyclic. In other words, there exists an element $e \in R \setminus \{0 \}$ such that $Re$ is contained in all non-zero ideals in $R.$ A finite commutative ring is Frobenius if and only if it is a product of finite commutative local Frobenius rings.

Let $\zeta_n:= e^{\frac{2 \pi \bm{i}}{n}}$ be a fixed primitive $n$-th root of unity and $\chi: R \to \C^{\times}$ be the character defined by $\chi(a) =\zeta_n^{\psi(a)}.$ By \cite[Proposition 2.4]{nguyen2024integral}, the dual group $\Hom(R, \C^{\times})$ is a cyclic $R$-module generated by $\chi$; namely every character of $R$ is of the form $\chi_r$ where $\chi_r(a)=\chi(ra).$  We note that by the definition of $\chi$, the following identity holds for all $x, y \in R.$
\[ \chi_x(y) = \chi_y(x) = \chi_{xy}(1) =\chi(xy).\]

We now recall the definition of the Ramanujan sum $c(g,R)$.

\begin{definition} \label{def:ramanujan_sums}
Let $g \in R$.  The generalized Ramanujan sum $c(g, R)$ is defined as follows
\[ c(g,R) = c_{\psi}(g, R) = \sum_{a \in R^{\times}} \chi_g(a) = \sum_{a \in R^{\times}} \chi(ga). \]
\end{definition}

We have two remarks about this definition. First, $c_n(m)$ is precisely $c(m,\Z/n).$ Therefore, $c(g,R)$ is a natural generalization of the classical Ramanujan sum defined in \cref{eq:classical_ramanujan}. Second, at first glance, it seems that $c_{\psi}(g,R)$ depends on $\psi.$ However, as explained in \cite[Theorem 4.14]{nguyen2025gcd}, $c_{\psi}(g,R)$ is independent of $\psi.$ In fact, we have 
\begin{equation} \label{eq:c_g_R_formula}
    c(g, R) = \frac{\varphi(R)}{\varphi(R/\Ann_{R}(g))} c(1, R/\Ann_R(g)) = \frac{\varphi(R)}{\varphi(R/\Ann_{R}(g))} \mu(R/\Ann_R(g)). 
    \end{equation}
Here $\Ann_{R}(g)$ is the annihilator ideal of $g$; namely 
\[ \Ann_{R}(g) = \{r \in R \mid gr = 0\}.\]
Additionally, $\varphi$ is the generalized Euler function; $\varphi(R)=|R^{\times}|.$ Finally, $\mu$ is the generalized M\"obius function defined as follows. We recall that by the structure theorem for commutative Artinian rings, $R$ is isomorphic to a finite product of local rings $R \cong \prod_{i=1}^d R_i$.  Then, $\mu(R)$ is defined by the following rule. 

\[
\mu(R) =
\begin{cases} 
    1, & \text{if } |R|=1, \\ 
    0, & \text{if there exists } 1 \leq i \leq d \text{ such that } R_i \text{ is not a field,} \\
    (-1)^d, & \text{otherwise.}
\end{cases}
\]
Let $K_1, K_2, \ldots, K_m$ be the orbits of $R$ under the action of $R^{\times}.$ We will define $\tau(R)=m$ since in the case $R=\Z/n$, $\tau(R)=\tau(n)$--the number of positive divisors of $n.$ By definition, the elements in each $K_i$ are associates.  Without loss of generality, we will assume that $K_1 = R^{\times}.$  For each $1 \leq i \leq m$, let 
\[ X_i =\{\chi_g| g \in K_i \} .\] 
For convenience, we will denote by $K_g$ (respectively $X_g$) the class that contains $g$ (respectively $\chi_g).$ This is consistent with our convention that $K_1 = R^{\times}.$ Additionally, we will use the notations $\sigma_x(K_y)$ and $\Omega_x(K_y)$ for the appropriate sums. With these preparations, we are now able to recall the following proposition. 
\begin{prop}(See \cite[Theorem 4.1]{supercharacters_nguyen}) \label{prop:supercharacter_associated_units}
The pair $(\mathcal{K}, \mathcal{X})$ where $\mathcal{K} =\{K_1, K_2, \ldots, K_m \}$ and $\mathcal{X}=\{X_1, X_2, \ldots, X_m \}$ is a supercharacter theory for $(R,+).$ Furthermore,  for each $1 \leq i \leq m$
\[ |X_i|=|K_i| = \varphi(R/\Ann_{R}(g_i)),\]
where $g_i$ is an element in $K_i.$
\end{prop}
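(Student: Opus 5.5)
We need to prove that $(\mathcal{K},\mathcal{X})$ is a supercharacter theory satisfying the extra conditions, and that $|X_i|=|K_i|=\varphi(R/\Ann_R(g_i))$.

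\emph{Partitions and cardinalities.} The sets $K_i$ are the orbits of $R^{\times}$ acting on $R$ by multiplication, so they partition $R$. The singleton $\{0\}$ is an orbit. Since $-1\in R^{\times}$, each orbit satisfies $K_i=-K_i$.

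By \cite[Proposition 2.4]{nguyen2024integral}, the map $r\mapsto \chi_r$ is a bijection $R\to\widehat{R}$. Hence the sets $X_i$, being images of the $K_i$, partition $\widehat{R}$, and $|X_i|=|K_i|$ with $|\mathcal X|=|\mathcal K|$.

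To count an orbit $K_i=R^{\times}g_i$, consider the map $R\to Rg_i$, $r\mapsto rg_i$. Its kernel is $\Ann_R(g_i)$, so $Rg_i\cong R/\Ann_R(g_i)$ as $R$-modules. Under this isomorphism $K_i$ corresponds to the orbit of $1$ under $R^{\times}$, namely the image of $R^{\times}$ in $R/\Ann_R(g_i)$.

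The key fact is that for a finite commutative ring $R$ and an ideal $I$, the reduction map $R^{\times}\to (R/I)^{\times}$ is surjective. This holds because $R$ is a finite product of local rings and $R/I$ is a product of quotients of those factors. A lift of a unit in a local quotient is a unit: if it is nonzero in the residue field it lies outside the maximal ideal. In the trivial-quotient case any unit lifts. Therefore $|K_i|=\varphi(R/\Ann_R(g_i))$.

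\emph{Constancy conditions.} Write $\sigma_i=\sum_{g\in K_i}\chi_g$. For $x\in R$ and $u\in R^{\times}$,
\[
\sigma_i(ux)=\sum_{g\in K_i}\chi(gux)=\sum_{g'\in K_i}\chi(g'x)=\sigma_i(x),
\]
since $g\mapsto gu$ permutes $K_i$. So $\sigma_i$ is constant on each $K_j$.

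Symmetrically, for $\chi_g\in X_i$ the sum $\sum_{k\in K_j}\chi_g(k)=\sum_{k\in K_j}\chi(gk)$ depends only on the class of $g$ under units, by the same reindexing $k\mapsto uk$. This gives the extra condition (4).

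\emph{Obstacle.} The main point needing care is the surjectivity of units modulo an ideal. The Frobenius hypothesis is used only to identify $\widehat R$ with $R$ via $\chi$, which is what makes $X_i$ a genuine partition of the dual of the same size as $K_i$.
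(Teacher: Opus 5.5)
Your proof is correct, and your cardinality argument is essentially the paper's orbit--stabilizer argument: the stabilizer of $g$ is the kernel of $R^{\times}\to(R/\Ann_R(g))^{\times}$, so the orbit is the image of $R^{\times}$. You also explicitly prove that this reduction map is surjective, which the paper's appeal to the first isomorphism theorem needs but does not state. The only other difference is that the paper cites \cite[Theorem 4.1]{supercharacters_nguyen} for the supercharacter-theory axioms, while you verify them directly by reindexing over unit orbits; this makes your argument self-contained and shows that the Frobenius hypothesis enters only through the bijection $r\mapsto\chi_r$.
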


\begin{proof}
    The fact that $(\mathcal{K}, \mathcal{X})$ is a supercharacter theory is a direct consequence of \cite[Theorem 4.1]{supercharacters_nguyen}. Let us now prove the second part about the size of $|K_g|=|X_g|$. Let $\text{Stab}(g)$ be the stabilizer of $g$. We have  
\[ \text{Stab}(g) = \{u \in R^{\times}| ug =g \} = \{u \in R^{\times}| (u-1) \in \Ann_{R}(g) \} = \ker(R^{\times} \to (R/\Ann_{R}(g))^{\times}).  \]
By the orbit-stabilizer theorem, we have $|K_g| = \dfrac{\varphi(R)}{|\text{Stab(g)}|}$ and hence by the first isomorphism of groups
\[ |K_g| = |X_g| = \varphi(R/\Ann_{R}(g)). \qedhere \]
\end{proof}
We now explain the connection between the Ramanujan sums $c(g,R)$ and the values $\sigma_i(K_j)$ and $\Omega_i(K_j)$ described in \cref{subsec:supercharacter_group}.  By definition, 
\[ c(g,R) = \Omega_{K_1}(X_g) = \Omega_1(X_g).\]
We now use the orthogonality properties described in \cref{subsec:supercharacter_group} to derive several arithmetical properties of Ramanujan sums.  
We first have the following two theorems, which generalize a result of Carmichael in \cite{carmichael1932expansions} for classical Ramanujan sums. 
\begin{thm} \label{prop:first_orthogonal}
      \[ \sum_{g \in R} c(g,R)^2 = |R| \varphi(R),\] 
   where $\varphi(R) = |K_1|=|R^{\times}|.$
\end{thm}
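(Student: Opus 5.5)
Our plan is to obtain the statement as the case $i=1$ of \cref{eq:orthogonal_6}, applied to the supercharacter theory $(\mathcal{K},\mathcal{X})$ of \cref{prop:supercharacter_associated_units}. Take $G=(R,+)$ and $K_1=R^{\times}$. Then \cref{eq:orthogonal_6} reads
\[ \sum_{\ell=1}^m |X_\ell|\, \Omega_1(X_\ell)^2 = |K_1|\,|R| = \varphi(R)\,|R|. \]
It remains to identify the left-hand side with $\sum_{g\in R} c(g,R)^2$. Since $K_i=-K_i$, we are in the setting where \cref{eq:orthogonal_6} was derived, so the relation applies.

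First, for each $g\in R$ we have $c(g,R)=\Omega_1(X_g)$ by definition. Here $\Omega_1(X)$ is independent of the chosen character in $X$, which is property (4) of the supercharacter theory. Hence $c(g,R)$ depends only on the class $K_g$. Indeed, $g\in K_\ell$ exactly when $\chi_g\in X_\ell$, by the definition $X_\ell=\{\chi_g \mid g\in K_\ell\}$.

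Next, we split the sum over $R$ along the partition $R=\bigsqcup_\ell K_\ell$:
\[ \sum_{g\in R} c(g,R)^2=\sum_{\ell=1}^m |K_\ell|\,\Omega_1(X_\ell)^2 . \]
By \cref{prop:supercharacter_associated_units} we have $|K_\ell|=|X_\ell|$, so the right-hand side equals $\sum_\ell |X_\ell|\,\Omega_1(X_\ell)^2$. Combining this with the displayed case of \cref{eq:orthogonal_6} gives the claim. As a sanity check, $c(g,R)$ is real, which also follows directly from $-R^{\times}=R^{\times}$, so squaring introduces no issue with complex conjugates.

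The only real subtlety is the step $|K_\ell|=|X_\ell|$ together with the well-definedness of $X_\ell$. Both are supplied by \cref{prop:supercharacter_associated_units}. Implicit in that proposition is that $g\mapsto\chi_g$ is a bijection $R\to\widehat{R}$, which holds by the Frobenius property (\cite[Proposition 2.4]{nguyen2024integral}). This bijection is what makes the two sums match.

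Alternatively, we could verify the identity directly by Parseval. For fixed $a$, the map $g\mapsto \chi(ga)$ runs over the characters of $R$. Expanding the square gives
\[ \sum_g\sum_{a,b\in R^\times}\chi(g(a-b)) = |R|\cdot\#\{(a,b)\in R^{\times}\times R^{\times} : a=b\}=|R|\varphi(R). \]
This uses the non-degeneracy of $\psi$, which guarantees that $\sum_g\chi(gx)=0$ for $x\neq 0$. This gives an independent check of the statement.
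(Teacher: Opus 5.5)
Your proposal is correct and follows the paper's first proof: you split the sum over the orbits $K_\ell$, use $c(g,R)=\Omega_1(X_g)$ and $|K_\ell|=|X_\ell|$, and invoke \cref{eq:orthogonal_6} with $i=1$. Your alternative character-orthogonality check is the paper's second, elementary proof, except that it counts pairs with $a=b$ where the paper counts $a+b=0$; the two counts agree because $c(g,R)$ is real, equivalently because $-R^{\times}=R^{\times}$.
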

\begin{proof}
    Using the facts that $|X_i|=|K_i|$, $c(g,R) = \Omega_1(X_g)$ and \cref{eq:orthogonal_6}, we have 
\begin{align*}
    \sum_{g \in R } c(g,R)^2 &= \sum_{\ell=1}^{m} \left[\sum_{g \in K_\ell} c(g,R)^2 \right]   \\ 
    &=\sum_{\ell=1}^m |K_\ell| \Omega_1(X_\ell)^2 =  \sum_{\ell=1}^m |X_\ell| \Omega_1(X_\ell)^2 = |K_1||R| = \varphi(R)|R|.
\end{align*}
\end{proof}
While the first orthogonality condition described in \cref{prop:first_orthogonal} follows directly from supercharacter theory, we can also prove it using a direct and elementary argument. 

\begin{proof}
    We have
    \begin{align*}
        \sum_{g\in R} c(g,R)^2 &= \sum_{g\in R}\left(\sum_{a\in R^{\times}} \chi_(ga)\right)\left(\sum_{b\in R^{\times}}\chi(gb)\right)\\
        &=\sum_{a\in R^{\times}}\sum_{b\in R^{\times}}\sum_{g\in R} \chi(g(a+b)) =\sum_{a\in R^{\times}}\sum_{b\in R^{\times}}\sum_{g\in R} \chi_{a+b}(g)
    \end{align*}
    The inner sum is 0 when $a+b\ne 0$ and it is $|R|$ when $a+b = 0$. Once $a$ is fixed, there is a unique $b$ such that $a+b =0$, therefore
  \[ \sum_{g \in R} c(g,R)^2 = \varphi(R) |R|. \]
\end{proof}
We describe another proof for \cref{prop:first_orthogonal} using graph theory. 
\begin{proof}
We recall that the unitary Cayley graph $G_R=\Gamma(R,R^{\times})$ is the graph with the following data 
\begin{enumerate}
    \item The vertex set of $G_R$ is $R.$
    \item Two vertices $a,b$ are adjacent if and only if $a-b \in R^{\times}.$
\end{enumerate}
As explained in \cite[Theorem 4.6]{nguyen2025gcd} (see also \cite[Theorem 4.6]{supercharacters_nguyen} for a generalization) the spectrum of $G_R$ is precisely $\{c(g,R)\}_{g \in R}.$ By the walk-counting formula, we have 
\[ \sum_{g \in R} c(g,R)^2 = \sum_{g \in R} \deg_{G_R}(g) = \varphi(R) |R|. \] 
\end{proof}

We discuss a slight generalization of \cref{prop:first_orthogonal}. First, we introduce the following definition. 
\begin{definition} \label{def:k-moment}
For each $k \geq 0$, we define $k$-th moment of Ramanujan sums as 
\[ M_k(R) = \sum_{g \in R} c(g,R)^{k}.\]
\end{definition}
By definition, we have $M_0(R)=|R|, M_1(R)=0, M_2(R) = \varphi(R)|R|$. 

\begin{rem}
    If $R_1$ and $R_2$ are two finite commutative Frobenius rings, then $M_{k}(R_1)= M_k(R_2)$ for all $k \geq 0$ if and only if $G_{R_1}$ and $G_{R_2}$ are cospectral. In particular, this happens if $G_{R_1}$ and $G_{R_2}$ are isomorphic. \cite[Theorem 5.3]{kiani2012unitary} and \cite[Theorem 4.1]{minac2024isomorphic} show that these two graphs are isomorphic if and only if $|R_1|=|R_2|$ and $R_1^{\s} \cong R_2^{\s}.$ Here $R^{\s} = R/\Rad(R)$ with $\Rad(R)$ being the Jacobson radical of $R.$ In \cite[Proposition 4.5]{minac2024isomorphic}, we show some examples of $R_1, R_2$ which are not isomorphic but their associated unitary Cayley graphs are. In fact, when these rings are a finite quotient of $\F_q[x]$, \cite[Proposition 4.5]{minac2024isomorphic} also  determines the number of isomorphism classes of unitary Cayley graphs. 
\end{rem}

\begin{prop} \label{prop:k-moment}
        Let $R= \prod_{i=1}^d R_i$ be a product of finite local Frobenius rings. For each $1 \leq i \leq d,$ let $f_i$ be the order of the residue field of $R_i.$ Then, for $k \geq 1$
\[ M_k(R) = |R|^k \prod_{i=1}^d \left[\left(1-\frac{1}{f_i} \right)^k + (-1)^k \frac{f_i-1}{f_i^k} \right] \] 
\end{prop}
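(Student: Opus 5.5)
Since $c(g,R)$ is multiplicative over the product decomposition, the moment $M_k(R)$ factors, and the proof reduces to computing $M_k$ for a single local Frobenius ring.

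\textbf{Step 1: multiplicativity.} Write $R=\prod_{i=1}^d R_i$ and $g=(g_1,\dots,g_d)$. We have $R^{\times}=\prod_i R_i^{\times}$, and we may take the nondegenerate functional to be compatible with the product, so that $\chi(ga)=\prod_i \chi^{(i)}(g_ia_i)$. Since $c(g,R)$ is independent of $\psi$ by \cite[Theorem 4.14]{nguyen2025gcd}, this choice is harmless. Alternatively, one can argue from \cref{eq:c_g_R_formula}, because $\Ann$, $\varphi$ and $\mu$ all decompose over products. Either way, $c(g,R)=\prod_i c(g_i,R_i)$. Summing $c(g,R)^k$ over $g\in R$ then gives
\[ M_k(R)=\prod_{i=1}^d M_k(R_i). \]
Since $|R|^k=\prod_i|R_i|^k$, it suffices to prove
\[ M_k(S)=|S|^k\left[\left(1-\tfrac1f\right)^k+(-1)^k\tfrac{f-1}{f^k}\right] \]
for a local Frobenius ring $S$ with maximal ideal $\mathfrak m$ and residue field of order $f$.

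\textbf{Step 2: the local case.} Here I use \cref{eq:c_g_R_formula}, which says $c(g,S)=\frac{\varphi(S)}{\varphi(S/\Ann(g))}\mu(S/\Ann(g))$. The quotient $S/\Ann(g)$ is local. It is the zero ring exactly when $g=0$. It is a field exactly when $\Ann(g)=\mathfrak m$, and otherwise $\mu$ vanishes. This gives three cases:
\begin{itemize}
\item[(a)] If $g=0$, then $c(0,S)=\varphi(S)=|S|(1-1/f)$.
\item[(b)] If $\Ann(g)=\mathfrak m$, then $c(g,S)=-\varphi(S)/(f-1)=-|S|/f$.
\item[(c)] In all other cases, $c(g,S)=0$.
\end{itemize}
Here we use $\varphi(S)=|S|-|\mathfrak m|=|S|(1-1/f)$.

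\textbf{Step 3: counting case (b).} It remains to count the $g\neq0$ with $\Ann(g)=\mathfrak m$, that is, the nonzero elements of the socle $\Ann(\mathfrak m)$. Because $S$ is Frobenius, the socle is a simple $S$-module, so it is isomorphic to $S/\mathfrak m$ and has exactly $f$ elements. Hence there are $f-1$ such $g$. Summing the $k$-th powers over the three cases,
\[ M_k(S)=|S|^k\left(1-\tfrac1f\right)^k+(f-1)(-1)^k\frac{|S|^k}{f^k}, \]
which is the claimed local formula.

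\textbf{Main obstacle.} The only non-routine point is the socle count in Step 3. It requires the characterization that a local Frobenius ring has a simple (cyclic) socle, quoted earlier from \cite{honold2001characterization, lamprecht1953allgemeine}. The socle is then a one-dimensional $S/\mathfrak m$-vector space, so it has $f$ elements.
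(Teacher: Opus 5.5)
Your proposal is correct and follows the paper's own proof: multiplicativity of $c(\cdot,R)$ over the product reduces to the local case. There the formula $c(g,R)=\frac{\varphi(R)}{\varphi(R/\Ann_R(g))}\mu(R/\Ann_R(g))$ leaves only $g=0$ and the $f-1$ nonzero elements of the minimal ideal $Re$, which is what you call the socle (the paper describes them as the associates of $e$). Your justification of the count $f-1$, via the socle being isomorphic to $R/\mathfrak{m}$, is slightly more explicit than the paper's, but the argument is otherwise identical.
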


\begin{proof}
We observe that $M_k(R)$ is multiplicative with respect to direct products; namely 
\[ M_k(R_1) M_k(R_2) =M_k(R_1 \times R_2).\] 
This follows directly from the property that for $r_1 \in R_1$ and $r_2 \in R_2 $
\[ c((r_1, r_2), R_1 \times R_2) = c(r_1, R_1) c(r_2, R_2).\]
Therefore, it is enough to prove this formula when $R$ is a local ring with the maximal ideal $\mathfrak{m}$ and $f=R/\mathfrak{m}$ is the order of the residue field. As explained after \cref{defn:frobenius}, there exists an element $e \in R \setminus \{0\}$ such that $Re$ is contained in all non-zero ideals in $R.$ If $g \in R$ such that $g \neq 0$ and $g$ is not associated to $e$ then $\Ann_{R}(g)$ is a proper sub-ideal of $\mathfrak{m}.$ Consequently $\mu(R/\Ann_R(g))=0$ and hence $c(g,R)=0.$ If $g=0$ then 
\[ c(0,R)=\varphi(R) = |R| \left(1-\frac{1}{f} \right) .\] 
If $g$ is associated with $e$ (there are exactly $f-1$ such elements) then by the short exact sequence 
\[ 1 \to 1 + \mathfrak{m} \to R^{\times} \to (R/\mathfrak{m})^{\times} \to 1, \]
we have 
\[ c(g,R) = c(e,R) = -\frac{\varphi(R)}{\varphi(R/\mathfrak{m})}= -|\mathfrak{m}|= \frac{-|R|}{f}\]
We conclude that 
\[ M_k(R) = |R|^k \left[\left(1-\frac{1}{f} \right)^k + (-1)^k \frac{f-1}{f^k} \right] \]
\end{proof}

We show that the graph-theoretic argument can be used to provide another proof for the derivation of the third moment $M_3(R)$ of Ramanujan sums. 
\begin{prop}
    Let $R= \prod_{i=1}^d R_i$ be a product of finite commutative local Frobenius rings. For each $1 \leq i \leq d,$ let $f_i$ be the order of the residue field of $R_i.$ Then 
    \[ M_3(R) = \sum_{g \in R} c(g,R)^3 = |R|^3 \prod_{i=1}^d \left(1-\frac{1}{f_i} \right) \left(1-\frac{2}{f_i} \right).\]
\end{prop}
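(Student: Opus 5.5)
The plan is to use the walk-counting formula for the unitary Cayley graph $G_R=\Gamma(R,R^{\times})$. As recalled in the graph-theoretic proof of \cref{prop:first_orthogonal}, the spectrum of $G_R$ is $\{c(g,R)\}_{g\in R}$. Hence $M_3(R)=\sum_g c(g,R)^3=\operatorname{tr}(A^3)$, where $A$ is the adjacency matrix of $G_R$. Since $G_R$ is undirected and simple (as $R^{\times}=-R^{\times}$ and $0\notin R^{\times}$), $\operatorname{tr}(A^3)$ counts closed walks of length $3$, that is, $6$ times the number of triangles. Equivalently, and more conveniently, it is the number of ordered triples $(a,b,c)\in R^3$ with $a-b,\ b-c,\ c-a\in R^{\times}$.

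The first step is to count these triples. Fix $a$; by translation invariance there are $|R|$ choices. Set $x=b-a$ and $y=c-b$. The condition becomes $x\in R^{\times}$, $y\in R^{\times}$ and $x+y\in R^{\times}$. So
\[
M_3(R)=|R|\cdot N(R),\qquad N(R)=\#\{(x,y)\in (R^{\times})^2 : x+y\in R^{\times}\}.
\]
Because $R\cong\prod_i R_i$ and units are detected componentwise, $N$ is multiplicative: $N(R)=\prod_i N(R_i)$. Moreover $|R|=\prod_i|R_i|$, so it suffices to show $N(R_i)=|R_i|^2(1-\tfrac1{f_i})(1-\tfrac2{f_i})$ for a local ring $R_i$ with maximal ideal $\mathfrak m_i$.

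The local count reduces to the residue field, since $u\in R_i$ is a unit exactly when its image in $R_i/\mathfrak m_i\cong\F_{f_i}$ is nonzero. Each pair $(\bar x,\bar y)\in\F_{f_i}^2$ has $|\mathfrak m_i|^2$ lifts. In $\F_{f_i}$, the pairs with $\bar x,\bar y,\bar x+\bar y$ all nonzero number $(f_i-1)(f_i-2)$: choose $\bar x\neq0$, then $\bar y\notin\{0,-\bar x\}$. Therefore
\[
N(R_i)=|\mathfrak m_i|^2(f_i-1)(f_i-2)=|R_i|^2\left(1-\tfrac1{f_i}\right)\left(1-\tfrac2{f_i}\right),
\]
using $|\mathfrak m_i|=|R_i|/f_i$. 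Multiplying over $i$ gives the stated formula.

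The only delicate point is the first step: correctly identifying $\operatorname{tr}(A^3)$ with ordered closed walks, so that no extra factor of $6$ is introduced. As a consistency check, the result agrees with \cref{prop:k-moment} at $k=3$, because
\[
\left(1-\tfrac1f\right)^3-\frac{f-1}{f^3}=\frac{(f-1)\bigl((f-1)^2-1\bigr)}{f^3}=\left(1-\tfrac1f\right)\left(1-\tfrac2f\right).
\]
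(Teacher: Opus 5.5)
Your proof is correct and follows the paper's route: both identify $M_3(R)$ with the number of closed walks of length $3$ in the unitary Cayley graph $G_R$, counted as ordered adjacent pairs times common neighbours. The only difference is that you prove the common-neighbour count $|R|\prod_i(1-2/f_i)$ yourself, by counting unit pairs $(x,y)$ with $x+y$ a unit via multiplicativity and reduction to the residue fields, whereas the paper cites this count from the literature on unitary Cayley graphs.
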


\begin{proof}
    We know that $M_3(R)$ is precisely the number of closed walks of length $3$ in $G_R$. By \cite[Proposition 2.3]{unitary}, for two fixed vertices $a,b \in G_R$ which are adjacent, the number of vertices $c$ which are adjacent to both of them is 
    \[ |R| \prod_{i=1}^d \left(1-\frac{2}{f_i} \right).\]
    Therefore, the number of closed walks of length $3$ is 
    \[ M_3(R) = \varphi(R) |R| \times |R| \prod_{i=1}^d \left(1-\frac{2}{f_i} \right) =|R|^3 \prod_{i=1}^d \left(1-\frac{1}{f_i} \right) \left(1-\frac{2}{f_i} \right).  \]
\end{proof}

We now discuss the second orthogonality property. 
\begin{thm} \label{prop:second_orthogonal}
    Let $r_1, r_2 \in R$ such that $r_1$ and $r_2$ are not associates. Then 
    \[ \sum_{g \in R} c(r_1g, R) c(r_2g, R) =0.\]
\end{thm}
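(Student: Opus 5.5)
Two proofs suggest themselves: one through the orthogonality relation \cref{eq:orthogonal_5} and one by direct expansion. I will lead with the first, because it parallels the first proof of \cref{prop:first_orthogonal}.

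\emph{Supercharacter route.} We identify $c(rg,R)$ with an entry of the super-Fourier data. By definition,
\[ c(rg,R)=\sum_{a\in R^\times}\chi(rga)=\sum_{k\in K_r}\chi_g(k)\cdot\frac{\varphi(R)}{|K_r|}, \]
since $a\mapsto ra$ maps $R^\times$ onto $K_r$ and each fiber has size $|\mathrm{Stab}(r)|=\varphi(R)/|K_r|$. Hence $c(rg,R)=\frac{\varphi(R)}{|K_r|}\Omega_{K_r}(X_g)$. Next we group the sum over $g\in R$ by the classes $K_\ell$; condition (4) guarantees that $\Omega_{K_r}(X_g)$ depends only on $X_g$. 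This gives
\[ \sum_{g\in R}c(r_1g,R)c(r_2g,R)=\frac{\varphi(R)^2}{|K_{r_1}||K_{r_2}|}\sum_{\ell=1}^m |K_\ell|\,\Omega_{K_{r_1}}(X_\ell)\Omega_{K_{r_2}}(X_\ell). \]
By \cref{prop:supercharacter_associated_units} we have $|K_\ell|=|X_\ell|$, so the inner sum is exactly the left side of \cref{eq:orthogonal_5} up to the factor $|R|\sqrt{|K_{r_1}||K_{r_2}|}$. The reality of the $\Omega$'s is already built into \cref{eq:orthogonal_5}. Since $r_1,r_2$ are not associates, $K_{r_1}\neq K_{r_2}$, and \cref{eq:orthogonal_5} makes the sum vanish.

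\emph{Direct expansion (a check).} Alternatively, expand
\[ \sum_g\sum_{a,b\in R^\times}\chi(g(r_1a+r_2b)). \]
Summing over $g$ first, character orthogonality gives $|R|$ times the number of pairs $(a,b)\in (R^\times)^2$ with $r_1a=-r_2b$. Such a pair would make $r_1$ and $r_2$ associates, since $r_1=r_2(-ba^{-1})$. So no pair exists and the sum is $0$.

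The only delicate point is the bookkeeping converting $c(rg,R)$ into $\Omega_{K_r}(X_g)$, namely the stabilizer factor and the fact that $K_r$ is a full unit orbit. The direct expansion sidesteps even this, so I would present it as the main proof and the supercharacter computation as the conceptual explanation.
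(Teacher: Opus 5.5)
Your proposal is correct and matches the paper, which gives both of these proofs: the supercharacter computation via $c(rg,R)=|\mathrm{Stab}(r)|\,\Omega_{K_r}(X_g)$ combined with $|K_\ell|=|X_\ell|$ and \cref{eq:orthogonal_5}, and the direct character expansion. In the direct route, your observation that $r_1a+r_2b=0$ would force $r_1=r_2(-ba^{-1})$ is slightly cleaner than the paper's step, which passes through $Rr_1\neq Rr_2$ and cites Kaplansky's lemma.
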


We first give a proof using supercharacter theory.

\begin{proof}
    For each \(r \in R\), we have
    \[
        c(gr,R)
        =
        \sum_{a \in R^\times} \chi_{gr}(a)
        =
        \sum_{a \in R^\times} \chi_g(ra)
        =
        |\operatorname{Stab}(r)|\,\Omega_r(X_g).
    \]
    Here \(\operatorname{Stab}(r)\) is the stabilizer of \(r\) under the action of \(R^\times\), and
    \(\Omega_r(X_g)\) denotes \(\Omega_{K_r}(X_g)\). Using this formula, \cref{eq:orthogonal_5},
    the fact that \(K_{r_1} \neq K_{r_2}\), and the equality \(|K_\ell|=|X_\ell|\), we obtain
    \begin{align*}
        \sum_{g \in R} c(r_1g,R)c(r_2g,R)
        &=
        \sum_{\ell=1}^{m}
        \left[
            \sum_{g \in K_\ell} c(r_1g,R)c(r_2g,R)
        \right] \\
        &=
        |\operatorname{Stab}(r_1)|\,|\operatorname{Stab}(r_2)|
        \sum_{\ell=1}^m
        |X_\ell|\Omega_{r_1}(X_\ell)\Omega_{r_2}(X_\ell) \\
        &=0.
    \end{align*}
\end{proof}

Similar to \cref{prop:first_orthogonal}, we can also give a more direct proof of \cref{prop:second_orthogonal}. 

\begin{proof}
   \begin{align*}
        \sum_{g\in R} c(r_1g,R)c(r_2g,R) &= \sum_{g\in R}\left(\sum_{a\in R^{\times}}\chi_{gr_1}(a)\right)\left(\sum_{b\in R^{\times}}\chi_{r_2g}(b) \right)\\
        &=\sum_{a\in R^{\times}}\sum_{b\in R^{\times}}\sum_{g\in R} \chi(g(r_1a+r_2b))= \sum_{a\in R^{\times}}\sum_{b\in R^{\times}}\sum_{g\in R} \chi_{r_1a+r_2b}(g).
    \end{align*} 
The inner sum is 0 when $r_1a+r_2b \ne 0$, but because $Rr_1 \ne Rr_2$, then $r_1a+r_2b \ne 0$ for any $a,b\in R^{\times}$ 
(see \cite[Lemma 2.1]{kaplansky1949elementary} \label{lem:kaplansky}).
The proof follows.
\end{proof}

We now use \cref{prop:first_orthogonal} and \cref{prop:second_orthogonal} to explain some well-known orthogonality relations for classical Ramanujan sums. For $m,n \in \Z$, we note that the classical Ramanujan sum $c_n(m)$ is precisely $c(m,\Z/n).$

\begin{cor}
We have the following orthogonality relations 
\begin{enumerate}
    \item Suppose that $n$ is a multiple of $k.$ Then 
\begin{equation*}
\sum_{m=1}^{n} c_{k}(m)^2 = \varphi(k) n. 
\end{equation*}
\item Let $p,q$ be two distinct positive integers. Then 
\[ \sum_{m=1}^{pq} c_p(m) c_q(m)=0. \]
\end{enumerate}
\end{cor}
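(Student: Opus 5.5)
Both parts reduce to \cref{prop:first_orthogonal} and \cref{prop:second_orthogonal} applied to $R=\Z/n$ (respectively $R=\Z/pq$), once each classical sum $c_k(m)$ is expressed as a Ramanujan sum over that ring. The bridge is formula \cref{eq:c_g_R_formula}, which computes $c(g,R)$ from the quotient $R/\Ann_R(g)$.

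For part (1), take $R=\Z/n$ and $r=n/k$. Then $\Ann_R(rm)$ corresponds to the ideal generated by $n/\gcd(n,rm)$. Since $\gcd(n,rm)=r\gcd(k,m)$, we get
\[ R/\Ann_R(rm)\cong \Z/(k/\gcd(k,m)). \]
Applying \cref{eq:c_g_R_formula} in $\Z/n$ and in $\Z/k$ gives
\[ c(rm,\Z/n)=\frac{\varphi(n)}{\varphi(k)}\,c_k(m). \]
The cleaner alternative is the direct identity $c(rm,\Z/n)=\sum_{a\in(\Z/n)^\times}\zeta_k^{ma}$, combined with the surjectivity of $(\Z/n)^\times\to(\Z/k)^\times$, whose fibres all have size $\varphi(n)/\varphi(k)$. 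This yields the same relation.

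Rather than using $r$, I would prove part (1) directly, imitating the elementary proof of \cref{prop:first_orthogonal}. Write
\[ \sum_{m=1}^n c_k(m)^2=\sum_{a,b\in(\Z/k)^\times}\ \sum_{m=1}^n \zeta_k^{m(a+b)}. \]
Because $k\mid n$, the inner sum equals $n$ when $a+b\equiv 0 \pmod k$ and $0$ otherwise. Each $a$ has exactly one partner $b=-a$, so the total is $\varphi(k)n$. Equivalently, the sum is $n/k$ times the $R=\Z/k$ case of \cref{prop:first_orthogonal}, which is $(n/k)\,k\,\varphi(k)=n\varphi(k)$, since $m\mapsto c_k(m)$ is $k$-periodic.

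For part (2), take $R=\Z/pq$, $r_1=q$ and $r_2=p$. As above,
\[ c(qm,\Z/pq)=\frac{\varphi(pq)}{\varphi(p)}\,c_p(m),\qquad c(pm,\Z/pq)=\frac{\varphi(pq)}{\varphi(q)}\,c_q(m). \]
The ideals $(q)$ and $(p)$ of $\Z/pq$ are the ideals generated by $\gcd(q,pq)=q$ and $p$ respectively. These are distinct because $p\ne q$, so $q$ and $p$ are not associates. \cref{prop:second_orthogonal} then gives $\sum_{g}c(qg,R)c(pg,R)=0$, and dividing by the positive constant $\varphi(pq)^2/(\varphi(p)\varphi(q))$ gives the claim.

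A direct check also works and is quicker: the sum is $\sum_{a,b}\sum_{m=1}^{pq}\zeta_{pq}^{m(aq+bp)}$, and $aq+bp\equiv 0 \pmod{pq}$ would force $p\mid aq$ and $q\mid bp$. I would present that argument only as a remark.

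The main obstacle is the scaling identity $c(rm,\Z/n)=\frac{\varphi(n)}{\varphi(k)}c_k(m)$, that is, the equidistribution of units of $\Z/n$ over $(\Z/k)^\times$. I would justify it by the surjectivity of unit groups under quotient maps of $\Z/n$ (via CRT), or read it off from \cref{eq:c_g_R_formula}.
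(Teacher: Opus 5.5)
Your proposal is correct and matches the paper's proof. Part (1) is the paper's reduction, via $k$-periodicity of $c_k$, to \cref{prop:first_orthogonal} for $\Z/k$; your direct character-sum computation is just the elementary proof of that theorem written out, and the $r=n/k$ rescaling in $\Z/n$ is not needed there. Part (2) is exactly the paper's argument: the rescaling $c(qm,\Z/pq)=\frac{\varphi(pq)}{\varphi(p)}c_p(m)$, then \cref{prop:second_orthogonal} applied to the non-associates $q$ and $p$ in $\Z/pq$.
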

\begin{proof}
Let us prove the first statement. By definition, $c_k(m)$ only depends on $m$ modulo $k.$ Therefore 
    \[ \sum_{m=1}^n c_k(m)^2 = \frac{n}{k} \sum_{m=1}^k c_k(m)^2 = \frac{n}{k} \sum_{m \in \Z/k} c(m, \Z/k)^2.\]
    By \cref{prop:first_orthogonal} we know that 
    \[ \sum_{m \in \Z/k} c(m, \Z/k)^2 = k \varphi(k). \]
    This shows that 
    \[ \sum_{m=1}^n c_k(m)^2 = n \varphi(k) .\]    
We now prove the second orthogonality relation. We remark that 
\[ c_p(m) = \sum_{\substack{1 \leq j \leq p \\ \gcd(j,p)=1}} \zeta_p^{mj} = \frac{\varphi(p)}{\varphi(pq)} \sum_{\substack{1 \leq j \leq pq \\ \gcd(j,pq)=1}} \zeta_p^{mj} =  \frac{\varphi(p)}{\varphi(pq)} \sum_{\substack{1 \leq j \leq pq \\ \gcd(j,pq)=1}} \zeta_{pq}^{mqj} = \frac{\varphi(p)}{\varphi(pq)} c(mq, \Z/pq). \] 
Consequently 
\[ \sum_{m=1}^{pq} c_p(m) c_q(m)= \frac{\varphi(p)\varphi(q)}{\varphi(pq)^2} \sum_{m \in \Z/pq} c(mq, \Z/pq) c(mp, \Z/pq). \]
Since $p \neq q$, $p$ and $q$ are non-associate elements in $\Z/pq.$ As a result, the above sum is $0$ by \cref{prop:second_orthogonal}. 
    
\end{proof}

\section{Ramanujan determinant} \label{sec:determinant}

Let $R$ be a finite commutative Frobenius ring. We will again denote by $(\mathcal{K}, \mathcal{X})$ the supercharacter theory on $(R,+)$ associated with $R^{\times}$ as explained in \cref{prop:supercharacter_associated_units}. For each $1 \leq i \leq m$, fix a representative $x_i$ of $K_i.$ We first have the following theorem which gives an explicit description of the associated supercharacter table. 

\begin{thm} \label{thm:table-ramanujan}
    Let $S$ be the supercharacter table associated with the pair $(\mathcal{K}, \mathcal{X}).$ Then $S=C_R$ where 
    \[ C_R =  [c(x_j, R/\Ann_{R}(x_i))]_{1 \leq i, j \leq m} ; \]
and 
\begin{equation} \label{eq:ramanujan_def}
c(x_j, R/\Ann_{R}(x_i))= \frac{\varphi(R/\Ann_{R}(x_i))}{\varphi(R/\Ann_{R}(x_ix_j))} \mu(R/\Ann_R(x_i x_j)). 
\end{equation}
\end{thm}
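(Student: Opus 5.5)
The plan is to compute the entry $\sigma_i(K_j)$ directly from the definitions and match it with \cref{eq:c_g_R_formula} applied over the quotient ring $R/\Ann_R(x_i)$. By definition
\[ \sigma_i(K_j) = \sum_{\chi \in X_i} \chi(x_j) = \sum_{\chi_g \in X_i} \chi(g x_j). \]
The map $R^{\times} \to K_i$, $u \mapsto u x_i$, is surjective. Its fibers are cosets of $\text{Stab}(x_i)$, which by the proof of \cref{prop:supercharacter_associated_units} has order $\varphi(R)/\varphi(R/\Ann_R(x_i))$. Hence
\[ \sigma_i(K_j) = \frac{\varphi(R/\Ann_R(x_i))}{\varphi(R)} \sum_{u \in R^{\times}} \chi(u x_i x_j) = \frac{\varphi(R/\Ann_R(x_i))}{\varphi(R)}\, c(x_i x_j, R). \]
One point needs checking: distinct $g \in K_i$ give distinct characters $\chi_g$. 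This holds because $r \mapsto \chi_r$ is a bijection $R \to \widehat{R}$, the dual being a free cyclic $R$-module of size $|R|$.

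Next I would apply \cref{eq:c_g_R_formula} to $c(x_ix_j,R)$:
\[ c(x_ix_j,R) = \frac{\varphi(R)}{\varphi(R/\Ann_R(x_ix_j))}\,\mu(R/\Ann_R(x_ix_j)). \]
Substituting this into the previous display makes the factors $\varphi(R)$ cancel. This gives exactly the right-hand side of \cref{eq:ramanujan_def}, so $\sigma_i(K_j)$ equals that expression.

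It remains to justify the notation, that is, that this expression really is the Ramanujan sum $c(\bar x_j, R/\Ann_R(x_i))$ over the quotient ring. The main obstacle is that \cref{eq:c_g_R_formula} is stated for Frobenius rings, so I need $R' = R/\Ann_R(x_i)$ to be Frobenius. I would argue that $R' \cong Rx_i$ as $R$-modules. Since $R$ is Frobenius, hence self-injective, the annihilator of the ideal $Rx_i$ pulls back the generating character: the character $\chi_{x_i}$ is trivial on $\Ann_R(x_i)$, so it descends to $R'$. Moreover, a nonzero ideal $I/\Ann_R(x_i)$ in its kernel would give $Ix_i \neq 0$ with $\chi(Ix_i)=1$, contradicting non-degeneracy of $\psi$ on $R$. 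So $\chi_{x_i}$ is a generating character of $R'$, and $R'$ is Frobenius.

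Then, in $R'$, we have $\Ann_{R'}(\bar x_j) = \Ann_R(x_ix_j)/\Ann_R(x_i)$, so $R'/\Ann_{R'}(\bar x_j) \cong R/\Ann_R(x_ix_j)$. Applying \cref{eq:c_g_R_formula} in $R'$ then yields \cref{eq:ramanujan_def}. Alternatively, one can evaluate $c(\bar x_j, R')$ directly with the generating character $\chi_{x_i}$: using the surjectivity of $R^{\times} \to R'^{\times}$ (units lift in finite rings), the sum $\sum_{v \in R'^{\times}} \chi(x_i x_j v)$ equals the same normalized sum computed above. This gives $S = C_R$ entrywise.
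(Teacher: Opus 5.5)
Your proposal is correct and is essentially the paper's proof. Like the paper, you reparametrize $K_i$ via $u \mapsto ux_i$ to get $\sigma_i(K_j)=\frac{\varphi(R/\Ann_R(x_i))}{\varphi(R)}\,c(x_ix_j,R)$, then apply \cref{eq:c_g_R_formula}. Your extra check that $R/\Ann_R(x_i)$ is Frobenius is also correct: the descended character $\chi_{x_i}$ is generating, since an ideal $I\supsetneq \Ann_R(x_i)$ in its kernel would make $\chi$ trivial on the nonzero ideal $Ix_i$. This justifies reading each entry as a genuine Ramanujan sum over the quotient, a point the paper leaves implicit, and your aside about self-injectivity is not needed for it.
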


\begin{proof}
    By definition we have 
\begin{align*}
S_{ij} &= \sigma_i(K_j) = \sum_{\chi \in X_i} \chi(x_j) = \sum_{x \in K_i} \chi_{x}(x_j) = \sum_{x \in K_i} \chi_{x_j}(x) \\ 
 &= \frac{|K_i|}{\varphi(R)} \sum_{u \in R^{\times}} \chi_{x_j}(ux_i) = \frac{\varphi(R/\Ann_{R}(x_i))}{\varphi(R)} \sum_{u \in R^{\times}} \chi_{x_jx_i}(u)\\
 &= \frac{\varphi(R/\Ann_{R}(x_i))}{\varphi(R)}  c(x_ix_j, R)  = \frac{\varphi(R/\Ann_{R}(x_i))}{\varphi(R)}  \frac{\varphi(R)}{\varphi(R/\Ann_R(x_ix_j))}\mu(R/\Ann_R(x_ix_j)) \\
 &= \frac{\varphi(R/\Ann_{R}(x_i))}{\varphi(R/\Ann_{R}(x_ix_j))} \mu(R/\Ann_R(x_i x_j)).
\end{align*}
\end{proof}

By \cref{thm:det-general}, we also have the following which generalizes \cite[Theorem 1]{schlage2021determinant} and \cite[Proposition 2.4]{minac2024isomorphic}. 
\begin{thm} \label{thm:determinant}
Let $R$ be a finite commutative Frobenius ring. Then 
$|\det(C_R)| = |\det(S)|= |R|^{\frac{\tau(R)}{2}}$. 
\end{thm}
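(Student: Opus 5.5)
The claim follows by combining three earlier results: \cref{thm:table-ramanujan}, \cref{prop:supercharacter_associated_units}, and \cref{thm:det-general}.

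First, by \cref{thm:table-ramanujan} the supercharacter table $S$ of the pair $(\mathcal{K},\mathcal{X})$ is exactly $C_R$. The identification is entrywise, with rows indexed by the representatives $x_i$ of the $X_i$ and columns by the representatives $x_j$ of the $K_j$, so $\det(C_R)=\det(S)$ once the same ordering is used on both sides. Any reordering of the classes only permutes rows and columns, which changes the determinant by at most a sign and therefore does not affect $|\det|$.

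Second, I invoke \cref{thm:det-general} with $G=(R,+)$:
\[ |\det(S)|^2 = |R|^{m}\prod_{i=1}^m \frac{|X_i|}{|K_i|}. \]
This requires $(\mathcal{K},\mathcal{X})$ to be a genuine supercharacter theory, which is the first part of \cref{prop:supercharacter_associated_units}. That proposition also gives $|X_i|=|K_i|=\varphi(R/\Ann_R(g_i))$ for every $i$, so the product is $1$. Hence $|\det(S)|=|R|^{m/2}$, and $m$ is the number of $R^\times$-orbits, which is $\tau(R)$ by definition.

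For completeness I would also record why \cref{thm:det-general} holds, since the paper only states it. We have $\det(U)=\det(L)\det(S)\det(D)$ and $|\det U|=1$. Moreover $\det D=\prod_i|K_i|^{1/2}$ and $\det L=|G|^{-m/2}\prod_i|X_i|^{-1/2}$. Solving for $|\det S|$ gives the formula.

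The only point needing care is the size equality $|X_i|=|K_i|$. It holds because $g\mapsto\chi_g$ is a bijection $R\to\widehat{R}$, which is where the Frobenius hypothesis enters through the nondegeneracy of $\psi$. Under this bijection $K_i$ maps onto $X_i$ by the definition of $X_i$. This is already contained in \cref{prop:supercharacter_associated_units}, so no real obstacle remains.
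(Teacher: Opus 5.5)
Your proposal is correct and follows the paper's proof exactly: you identify $S=C_R$ via \cref{thm:table-ramanujan}, use $|X_i|=|K_i|$ from \cref{prop:supercharacter_associated_units}, and apply \cref{thm:det-general} to $(R,+)$ with $m=\tau(R)$. Your added derivation of \cref{thm:det-general} from the unitarity of $U=LSD$ is also correct, and it fills in a step the paper leaves implicit.
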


\begin{proof}
By Theorem \ref{thm:table-ramanujan}, we have \(C_R=S\). Additionally, $|K_i|=|X_i|$ for each $1 \leq i \leq m:=\tau(R)$. Therefore, applying Theorem \ref{thm:det-general} to the
additive group \((R,+)\), we conclude that
\[
|\det(S)|=|R|^{\tau(R)/2}.
\]
\end{proof}

We remark that while the definition of $C_R$ depends on the theory of Ramanujan sums, the final formula does not. In other words, for a finite commutative ring, it makes perfect sense to define $C_R$ as 
\[C_R =\left[\frac{\varphi(R/\Ann_{R}(x_i))}{\varphi(R/\Ann_{R}(x_ix_j))} \mu(R/\Ann_R(x_i x_j))\right]_{1 \leq i,j \leq m} .\]
Here $\{x_1, \ldots, x_m\}$ is a complete set of representatives for the orbits $R^{\times} \backslash R$; namely, they are pairwise non-associate in $R.$ We have the following theorem, which gives a new criterion for a finite commutative ring to be Frobenius. 

\begin{thm}
Let \(R\) be a finite commutative ring. Then $\det(C_R)\neq 0$
if and only if \(R\) is Frobenius.
\end{thm}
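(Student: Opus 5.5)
The plan has two directions. If $R$ is Frobenius, then \cref{thm:determinant} gives $|\det(C_R)| = |R|^{\tau(R)/2} \neq 0$, so there is nothing more to prove. For the converse I will show that whenever $R$ is \emph{not} Frobenius, $C_R$ has two equal columns, so that $\det(C_R)=0$. Throughout I use only the intrinsic definition
\[ (C_R)_{ij} = \frac{\varphi(R/\Ann_{R}(x_i))}{\varphi(R/\Ann_{R}(x_ix_j))} \mu(R/\Ann_R(x_i x_j)). \]
The key point is that, once the row index $i$ is fixed, the entry depends on the column index $j$ only through the ideal $\Ann_R(x_ix_j)$.

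\textbf{Reduction to the local case.} Write $R \cong \prod_{k=1}^d R_k$ with each $R_k$ local. The $R^{\times}$-orbits on $R$ are exactly the products of the $R_k^{\times}$-orbits on the factors. Annihilators are computed componentwise, and each of $R/\Ann$, $\varphi$ and $\mu$ is multiplicative over such products. Hence, after ordering the representatives compatibly, $C_R$ is the Kronecker product $C_{R_1}\otimes\cdots\otimes C_{R_d}$. It follows that $\det(C_R)$ is a product of nonzero powers of the $\det(C_{R_k})$. Since $R$ is Frobenius if and only if every $R_k$ is, it suffices to show that $\det(C_{R})=0$ for a local non-Frobenius ring $R$.

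\textbf{Local case.} Let $R$ be local with maximal ideal $\mathfrak{m}$ and residue field $\F=R/\mathfrak{m}$, and suppose $R$ is not Frobenius. The socle $\mathrm{Soc}(R)=\Ann_R(\mathfrak{m})$ is an $\F$-vector space, and by the characterization recalled after \cref{defn:frobenius} its dimension is at least $2$. If $s\in \mathrm{Soc}(R)$ and $u\in R^\times$, then $us=\bar u s$, so the orbit of $s$ is $\F^{\times}s$. Choosing nonzero $s,t\in\mathrm{Soc}(R)$ with $t\notin \F s$ therefore gives two non-associate elements, and we may take them as two of the representatives, say $x_a=s$ and $x_b=t$.

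Now fix a row index $i$. If $x_i$ is a unit, then $\Ann(x_is)=\Ann(s)=\mathfrak{m}=\Ann(t)=\Ann(x_it)$. Here $\Ann(s)=\mathfrak{m}$ holds because $s\neq 0$ and $s\mathfrak{m}=0$. If $x_i\in\mathfrak{m}$, then $x_is=x_it=0$, so both annihilators equal $R$. In either case $\Ann(x_ix_a)=\Ann(x_ix_b)$, so by the key point above columns $a$ and $b$ of $C_R$ coincide, and hence $\det(C_R)=0$.

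I expect the main care to be needed in the Kronecker-product reduction. One must check the multiplicativity of $\mu$ and $\varphi$ on the quotients $R/\Ann_R(x_ix_j)$, including the convention $\mu=1$ for the zero ring. One must also check that the bijection between orbits and tuples of orbits is compatible on the row and column sides simultaneously, so that the product ordering really yields a Kronecker product. An alternative that avoids the reduction altogether is to work directly in a non-local $R$: pick a non-Frobenius local factor $R_1$ and take the elements $(s,0,\dots,0)$ and $(t,0,\dots,0)$. Their products with any $x_i$ have equal annihilators by the same argument applied in the first coordinate, so again two columns of $C_R$ coincide. This is probably the cleaner route, and I would use it if the Kronecker bookkeeping becomes messy.
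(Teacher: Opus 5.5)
Your proposal is correct and takes essentially the same approach as the paper. It reduces to the local case via the Kronecker-product factorization of $C_R$, takes two non-associate nonzero socle elements (the paper phrases this as generators of two distinct minimal ideals $Re_1\neq Re_2$, via Honold's characterization), and shows their columns coincide by splitting on whether the row representative is a unit or lies in $\mathfrak{m}$. Your alternative of placing $s,t$ in one coordinate of a non-local $R$ is also valid; it only needs that annihilators are computed componentwise, so it avoids the Kronecker bookkeeping.
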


\begin{proof}
    We know that $\varphi$ and $\mu$ are multiplicative with respect to direct product; namely if $R=R_1 \times R_2$ then 
    \[ \varphi(R) = \varphi(R_1) \varphi(R_2), \mu(R) = \mu(R_1) \mu(R_2).\]
    Therefore, up to an ordering, $C_{R} = C_{R_1} \otimes C_{R_2}.$ Consequently,  
    \begin{equation} \label{eq:product}
    \det(C_R) = \det(C_{R_1})^{\tau(R_2)} \det(C_{R_2})^{\tau(R_1)}.
    \end{equation}
    By the Artin-Wedderburn theorem and the definition of commutative Frobenius rings, a finite commutative ring is Frobenius if and only if it is a product of  finite  commutative local Frobenius rings. By \cref{eq:product}, it is enough to prove that this proposition is true for the case $R$ is local.  If $R$ is Frobenius, then \cref{thm:determinant} implies that $\det(C_R) \neq 0.$ Let us assume that $R$ is not Frobenius. By \cite[Theorem 1]{honold2001characterization}, $R$ has two distinct minimal ideals  $Re_1$ and  $Re_2.$ The minimality condition implies that $\Ann_{R}(e_1)=\Ann_{R}(e_2)=\mathfrak{m}$ where $\mathfrak{m}$ is the maximal ideal of $R.$ We claim that for each $r \in R$
    \[\frac{\varphi(R/\Ann_{R}(r))}{\varphi(R/\Ann_{R}(e_1r))} \mu(R/\Ann_R(e_1 r)) = \frac{\varphi(R/\Ann_{R}(r))}{\varphi(R/\Ann_{R}(e_2r))} \mu(R/\Ann_R(e_2 r)).\]
    This will, of course, imply that $C_R$ has two identical columns and therefore it is singular and $\det(C_R)=0.$ To prove this fact, we consider two cases. If $r \in \mathfrak{m}$, then $re_1=r e_2=0.$ Therefore, both numbers are equal to $\varphi(R/\Ann_{R}(r))$. On the other hand, if $r \in R \setminus \mathfrak{m} = R^{\times}$, then $\Ann_{R}(e_1r)=\Ann_{R}(e_2r)=\mathfrak{m}$ and therefore these numbers are both equal to $-\frac{\varphi(R/\Ann_{R}(r))}{\varphi(R/\mathfrak{m})}$.

\end{proof}
While we are able to calculate the exact value for the determinant of $C_{R}$, the following more general question seems natural. 

\begin{question}
What can we say about the rank of $C_R$? 

\end{question}
\section{Kluyver's formula}  \label{sec:Kluyver}
In this section, we discuss an equivalent definition of Ramanujan sums, often called the Kluyver formula in the literature. Various works have studied special cases of this formula. For example, \cite{zheng2018polynomial} investigates the case where $R$ is a quotient of the polynomial ring $\F_q[x]$, while \cite{zheng2023ramanujan} examines the case where $R$ is a finite quotient of a Dedekind domain. The fact that these finite rings are Frobenius is proved in \cite[Theorem 3.8, Theorem 3.9]{nguyen2024integral}. In this context, our theorem below provides a unified approach to Kluyver's formula.

\begin{thm}
Let $R$ be a finite commutative Frobenius ring and $g \in R$. Then 

    \[ c(g,R) = \sum_{Rg \subset I} N(I) \mu(R/\Ann_{R}(I)).\]

    Here $N(I)$ is the order of the quotient ring $R/I$ and $\Ann_{R}(I)$ is the annihilator ideal of $I.$
\end{thm}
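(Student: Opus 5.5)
Both sides are multiplicative over direct products, so I will reduce to the local case and then check it directly.

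\emph{Step 1: multiplicativity.} Write $R=\prod_{i=1}^d R_i$ with each $R_i$ local Frobenius, and $g=(g_1,\dots,g_d)$. Every ideal of $R$ has the form $I=\prod I_i$ with $I_i$ an ideal of $R_i$. Moreover $Rg\subset I$ iff $R_ig_i\subset I_i$ for all $i$, and $\Ann_R(I)=\prod \Ann_{R_i}(I_i)$. Also $N(I)=\prod N(I_i)$, and $\mu$ is multiplicative. Hence the right-hand side factors as $\prod_i\sum_{R_ig_i\subset I_i}N(I_i)\mu(R_i/\Ann_{R_i}(I_i))$. The left-hand side factors as $\prod_i c(g_i,R_i)$, as already used in \cref{prop:k-moment}. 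So it suffices to treat $R$ local with maximal ideal $\mathfrak m$, residue field of order $f$, and minimal ideal $Re$ (the socle).

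\emph{Step 2: which ideals contribute in the local case.} Since $R/\Ann_R(I)$ is local, $\mu(R/\Ann_R(I))$ is nonzero only when $\Ann_R(I)=R$ (giving $\mu=1$) or $\Ann_R(I)=\mathfrak m$ (giving $\mu=-1$).
\begin{itemize}
\item $\Ann_R(I)=R$ holds iff $I=0$.
\item $\Ann_R(I)=\mathfrak m$ holds iff $0\neq I\subset \Ann_R(\mathfrak m)=\mathrm{soc}(R)$. For Frobenius local $R$ the socle is $Re$, a simple module, so this means $I=Re$.
\end{itemize}
Therefore the right-hand side equals
\[ [\,g=0\,]\,|R| \;-\; [\,g\in Re\,]\,N(Re). \]
Since $Re\cong R/\mathfrak m$, we have $|Re|=f$ and $N(Re)=|R|/f=|\mathfrak m|$. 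The precise identification of $\Ann_R(I)=\mathfrak m$ with $I=Re$, which uses the cyclic-socle characterization recalled after \cref{defn:frobenius}, is the only delicate point; the rest is bookkeeping.

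\emph{Step 3: compare with $c(g,R)$.} The values of $c(g,R)$ were already computed in the proof of \cref{prop:k-moment}.
\begin{itemize}
\item If $g=0$: both $Re$ and $0$ contain $Rg$, so the right-hand side is $|R|-|\mathfrak m|=\varphi(R)=c(0,R)$.
\item If $g$ is an associate of $e$ (equivalently $0\neq g\in Re$, since $Re$ is simple): the right-hand side is $-|\mathfrak m|=c(e,R)$.
\item Otherwise $g\notin Re$: no contributing ideal contains $Rg$, so the right-hand side is $0$. This matches $c(g,R)=0$, which holds because $\Ann_R(g)\subsetneq\mathfrak m$ forces $\mu(R/\Ann_R(g))=0$ by \cref{eq:c_g_R_formula}.
\end{itemize}
This proves the identity for local $R$, and by Step 1 for all finite commutative Frobenius rings.
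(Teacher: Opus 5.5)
Your proof is correct and follows the paper's argument. Both reduce to the local case by multiplicativity, observe that only $I=0$ and $I=Re$ contribute, and compare the three cases $g=0$, $Rg=Re$, and $g\notin Re$. The one small difference is that you identify the ideals with $\Ann_R(I)=\mathfrak{m}$ via $0\neq I\subseteq \Ann_R(\mathfrak{m})=Re$, whereas the paper invokes the double-annihilator duality $\Ann_R(\Ann_R(I))=I$; you also spell out why both sides vanish in the last case, which the paper only asserts.
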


\begin{proof}
We remark that both sides of the equation are multiplicative with respect to the direct product. 
Indeed, if \(R=R_1\times R_2\), then every ideal of \(R\) is of the form
\(I_1\times I_2\), and we have
\[
N(I_1\times I_2)=N(I_1)N(I_2),
\qquad
\operatorname{Ann}_R(I_1\times I_2)
=
\operatorname{Ann}_{R_1}(I_1)\times \operatorname{Ann}_{R_2}(I_2).
\]
Since \(\mu\) is multiplicative, the right-hand side is multiplicative. Similarly, the left-hand side is also multiplicative. As a result, we only need to prove this statement when $R$ is local.

Since $R$ is a finite commutative local Frobenius ring, it has a unique minimal ideal; namely $I_0=Re$ for some $e \in R.$ Furthermore, $\Ann_{R}(I_0)=\Ann_{R}(e)=\mathfrak{m}$ where $\mathfrak{m}$ is the maximal ideal of $R$. We note that since $R$ is Frobenius, it has an elegant duality property: for each ideal $I$ in $R$, $\Ann_{R}(\Ann_{R}(I))=I$ (see \cite{honold2001characterization}). Let us consider the right-hand side. By definition, $\mu(R/\Ann_{R}(I))=0$ unless $\Ann_{R}(I)=R$ (when $I=0$) or $\Ann_{R}(I)= \mathfrak{m}$ (when $I=Re$). We  consider a few cases. 
    
    \textbf{Case 1.} $g=0.$ In this case, we have $c(g,R)= \varphi(R).$  On the other hand, the right-hand side is equal to 
    \[ |R| - |R/Re| = |R| -\frac{|R|}{|Re|} = |R|-\frac{|R|}{|R/\mathfrak{m}|} = |R| -|\mathfrak{m}| = \varphi(R) = c(g,R). \]
    
    \textbf{Case 2.} $g$ is associated with $e$; namely $Rg=Re.$ In this case, by \cref{eq:c_g_R_formula},  $c(g,R)=  -\frac{\varphi(R)}{\varphi(R/\mathfrak{m})}$ which is equal to $-|m|$ by the short exact sequence 
    \[ 1 \to 1 + \mathfrak{m} \to R^{\times} \to (R/\mathfrak{m})^{\times} \to 1 .\] 
    On the other hand, the only non-zero terms on the right side occur at $I=Re.$ Therefore, the right-hand side is equal to $-|R/Re| = -|\mathfrak{m}|.$

    \textbf{Case 3.} $g \neq 0$ and $Rg \neq Re.$ In this case, both sides are equal to $0.$
\end{proof}

\section*{Acknowledgements}
The first-named author is grateful to Professor Torsten Sander for sharing his insights and providing constant encouragement, which have been instrumental in advancing this line of research to its current stage. He also wishes to thank Professor Stephan R. Garcia for his helpful and encouraging correspondence. 
The main results of this paper were essentially completed during the spring of 2025, when the first-named author was a visiting assistant professor at Lake Forest College. We did not use AI in deriving these results. We did, however, use AI to assist with proofreading the paper.

\providecommand{\bysame}{\leavevmode\hbox to3em{\hrulefill}\thinspace}
\providecommand{\MR}{\relax\ifhmode\unskip\space\fi MR }
\providecommand{\MRhref}[2]{%
  \href{http://www.ams.org/mathscinet-getitem?mr=#1}{#2}
}
\providecommand{\href}[2]{#2}

\end{document}